\newtheorem{theorem}{Theorem}
\newtheorem{lemma}{Lemma}
\newtheorem{proposition}{Proposition}
\theoremstyle{definition}
\theoremstyle{remark}
\begin{document}

\begin{center}
{\small \rule{3cm}{0.5pt}}\\[0pt]
\textbf{Hyperbolic summation involving certain arithmetic functions}

\textbf{and the integer part function}\bigskip

\textsf{Meselem KARRAS}$^{\left( 1\right) }$\textsf{\ and Mihoub BOUDERBALA}$%
^{\left( 2\right) }$

\smallskip

$^{\left( 1\right) }$\textrm{Tissemsilt University}

\textrm{Faculty of Science and Technology}

\textrm{FIMA Laboratory, Khemis Miliana, Algeria.}

\textrm{e-mail: m.karras@univ-tissemsilt.dz\bigskip }

\textrm{\ }$^{\left( 2\right) }$\textrm{Khemis Miliana University ( UDBKM)}

\textrm{FIMA Laboratory, Faculty of matter sciences and Computer Sciences \\[%
0pt]
Rue Thniet El Had, Khemis Miliana, 44225, Ain Defla Province, Algeria\\[0pt]
e-mail:}\texttt{~}\textrm{mihoub75bouder@gmail.com}

\bigskip \smallskip

\bigskip
\end{center}

\textbf{Abstract.} Let $f$ be an arithmetic function satisfying certain
conditions. In this paper, we give an asymptotic formula for the sum%
\begin{equation*}
\sum_{n_{1}n_{2}\cdots n_{r}\leq x}f\left( \left\lfloor \frac{x}{%
n_{1}n_{2}\cdots n_{r}}\right\rfloor \right) \text{ }\left( r\geq 2\right) ,
\end{equation*}%
where $\left\lfloor .\right\rfloor $ denotes the integer part function.

\bigskip

\noindent \textbf{MSC 2020}: 11A25, 11N37.

\noindent \textbf{Key words}: Arithmetic function , Integer part, Asymptotic
formula.

\section{\textbf{Introduction}}

\noindent Let $\lfloor t\rfloor $ denote the integer part of the number $t$
and let $f$ be an arithmetic function that verifies some conditions. Define
the following sum%
\begin{equation}
S_{f}(x):=\sum_{n\leq x}f\left( \left\lfloor \frac{x}{n}\right\rfloor
\right) .  \label{2}
\end{equation}%
Many researchers have been interested in the sum $\left( \ref{2}\right) $.
For more information, see, for example, Bordell\`{e}s, and others \cite%
{BDHPS}. Independently, in \cite{W}, \cite{Z} the others established that
for an arithmetic function $f(n)$\ satisfying $f(n)\ll n^{\alpha }\left(
\log {n}\right) ^{\theta }$, where $\alpha \in \lbrack 0,1)$ and $\theta
\geq 0,$\ the sum $\left( \ref{2}\right) $ can be expressed as%
\begin{equation*}
S_{f}(x)=x\sum_{n=1}^{\infty }\frac{f(n)}{n(n+1)}+\mathcal{O}\left(
x^{(1+\alpha )/2}\left( \log {x}\right) ^{\theta }\right) .
\end{equation*}%
For $f=\tau $, Ma and Sun \cite{MS} proved the refined result%
\begin{equation*}
S_{\tau }(x)=\sum_{n\leq x}\tau \left( \left\lfloor \frac{x}{n}\right\rfloor
\right) =x\sum_{n=1}^{\infty }\frac{\tau (n)}{n(n+1)}+\mathcal{O}\left(
x^{11/23+\varepsilon }\right) .
\end{equation*}%
Recent research further refined the error terms for multiplicative
functions. In contrast, additive functions, such as $\omega (n)$, the number
of distinct prime divisors, present unique challenges. For instance, in \cite%
{karras and Bouder} it was shown that 
\begin{equation*}
\sum_{n\leq x}\omega \left( \left\lfloor \frac{x}{n}\right\rfloor \right)
=Cx+\mathcal{O}\left( x^{1/2}\log x\right) ,
\end{equation*}%
where $C\approx 0.5918$ is a constant defined by 
\begin{equation*}
C=\sum\limits_{n=1}^{\infty }\dfrac{\omega \left( n\right) }{n\left(
n+1\right) }.
\end{equation*}%
Bordell\`{e}s \cite{Bordell} has improved the error term and shown that 
\begin{equation*}
\sum_{n\leq x}\omega \left( \left\lfloor \frac{x}{n}\right\rfloor \right)
=Cx+\mathcal{O}\left( x^{455/914}\log x\right) ,
\end{equation*}%
where $455/914\approx 0.4978.$

\noindent In the general case, we can study sums of the form%
\begin{equation*}
S_{f,r}(x)=\sum_{n_{1}n_{2}\cdots n_{r}\leq x}f\left( \left\lfloor \frac{x}{%
n_{1}n_{2}\cdots n_{r}}\right\rfloor \right) ,
\end{equation*}%
where $r\geq 2$. This can be expressed as%
\begin{equation*}
S_{f,r}(x)=\sum_{n\leq x}f\left( \left\lfloor \frac{x}{n}\right\rfloor
\right) \tau _{r}\left( n\right) ,
\end{equation*}%
where the function $\tau _{r}\left( n\right) $ counts the ways to write $n$
as a product of $r$ positive integers with $\tau _{2}\left( n\right) =\tau
\left( n\right) $. In particular, in \cite{KA LI ST} for $r=2,$ the authors
established the following result%
\begin{equation*}
\sum_{n_{1}n_{2}\leq x}f\left( \left\lfloor \frac{x}{n_{1}n_{2}}%
\right\rfloor \right) =C_{1}(f)x\log {x}+C_{2}(f)x+\mathcal{O}\left( x^{%
\frac{4+3\alpha }{7}+\varepsilon }\right) ,
\end{equation*}%
where $\gamma $ is the Euler-Mascheroni constant and $C_{1}(f),$ $C_{2}(f)$
are two constants.

We are now ready to present our main results.

\section{\textbf{\ Main results}}

\begin{theorem}
Let $r\geq 2$ be an integer and let $f$ be an arithmetic function and assume
that there exists $\alpha \in \left[ 0,1\right) $ such that $\ $%
\begin{equation*}
f(n)\ll n^{\alpha },
\end{equation*}%
and%
\begin{equation*}
\sum_{n\leq x}f(n)\ll x\left( \log \log x\right) ^{\beta },\text{ }\beta
\geq 0.
\end{equation*}%
Then%
\begin{equation*}
S_{f,r}(x)=x\sum_{j=0}^{r-1}\sum_{i=0}^{j}\left( 
\begin{array}{c}
j \\ 
i%
\end{array}%
\right) \left( -1\right) ^{i}a_{j}C_{i}\left( \log x\right) ^{j-i}+\mathcal{O%
}\left( x\left( \log x\right) ^{r-2}\left( \log \log x\right) ^{\beta
}\right) ,
\end{equation*}%
such that $C_{i}$ and $a_{j}$ $($where $0\leq i\leq j\leq r-1)$ are real
constants.
\end{theorem}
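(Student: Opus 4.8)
The plan is to pass to the dual form of the sum and isolate the contribution of the small values of the integer part. Starting from the identity $S_{f,r}(x)=\sum_{d\le x}\tau_r(d)\,f(\lfloor x/d\rfloor)$ given in the introduction, I would collect the terms according to the value $m=\lfloor x/d\rfloor$. Since $\lfloor x/d\rfloor=m$ precisely when $x/(m+1)<d\le x/m$, writing $D_r(t)=\sum_{d\le t}\tau_r(d)$ yields the exact identity
\[
S_{f,r}(x)=\sum_{m\ge1}f(m)\big(D_r(x/m)-D_r(x/(m+1))\big).
\]
I would then split the range at $m\le\sqrt x$ and $m>\sqrt x$. The tail $m>\sqrt x$ corresponds to $d\le\sqrt x$, where $\tau_r(d)\ll_\varepsilon d^\varepsilon$ and $f(\lfloor x/d\rfloor)\ll (x/d)^\alpha$; this part is $\ll x^{(1+\alpha)/2+\varepsilon}=o(x)$ by the hypothesis $f(n)\ll n^\alpha$ and is absorbed into the error term.

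For the main range I would insert the Piltz asymptotic $D_r(t)=tP_{r-1}(\log t)+\Delta_r(t)$, where $P_{r-1}(u)=\sum_{j=0}^{r-1}a_j u^j$ has degree $r-1$ (its coefficients are exactly the constants $a_j$) and $\Delta_r(t)\ll t^{\theta_r}$ for some $\theta_r<1$. The smooth part $G(t)=tP_{r-1}(\log t)$ produces the main term: using $G(x/m)-G(x/(m+1))=\frac{x}{m(m+1)}P_{r-1}(\log(x/m))+O\!\big(x(\log x)^{r-2}/m^2\big)$ and expanding $(\log(x/m))^{j}=\sum_{i=0}^{j}\binom{j}{i}(-1)^i(\log x)^{j-i}(\log m)^{i}$, the sum factorises as
\[
x\sum_{j=0}^{r-1}\sum_{i=0}^{j}\binom{j}{i}(-1)^{i}a_j(\log x)^{j-i}\sum_{m\le\sqrt x}\frac{f(m)(\log m)^{i}}{m(m+1)}.
\]
I would set $C_i=\sum_{m=1}^{\infty}f(m)(\log m)^{i}/(m(m+1))$, which converges because $f(m)\ll m^\alpha$ with $\alpha<1$; completing each inner sum to infinity costs only $O(x^{(1+\alpha)/2}(\log x)^{r-1})$, while the $O$-term above contributes $O\big(x(\log x)^{r-2}\sum_m|f(m)|/m^2\big)=O(x(\log x)^{r-2})$. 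This reproduces exactly the stated main term together with the constants $a_j$ and $C_i$.

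The remaining, and genuinely hardest, piece is the divisor-fluctuation sum $\sum_{m\le\sqrt x}f(m)\big(\Delta_r(x/m)-\Delta_r(x/(m+1))\big)$, which must be shown to be $\ll x(\log x)^{r-2}(\log\log x)^\beta$. Inserting the pointwise bound $|\Delta_r(t)|\ll t^{\theta_r}$ together with $|f(m)|\ll m^\alpha$ only succeeds when $\theta_r+\alpha<1$, so for general $\alpha$ the oscillation of $f$ itself must be exploited; this is where the second hypothesis enters. I would apply partial summation, replacing $f$ by its summatory function $F(t)=\sum_{m\le t}f(m)\ll t(\log\log t)^\beta$ and estimating the resulting weighted sum of differences of $\Delta_r$, the factor $(\log\log x)^\beta$ in the final error being inherited from this step. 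Controlling this term uniformly in $\alpha\in[0,1)$ is the main obstacle, and I expect to need averaged rather than pointwise information about $\Delta_r$ on the short intervals $(x/(m+1),x/m]$. As an alternative that bypasses $\Delta_r$ for $r\ge2$, I would record the recursion $S_{f,r}(x)=\sum_{b\le x}S_{f,r-1}(\lfloor x/b\rfloor)$, which follows from $\lfloor\lfloor x/b\rfloor/a\rfloor=\lfloor x/(ab)\rfloor$, and argue by induction on $r$: the base case $S_{f,1}(x)=C_0x+O(x^{(1+\alpha)/2})$ comes from the Wu–Zhai estimate, and the error propagates through $\sum_{b\le x}(x/b)(\log(x/b))^{r-3}\ll x(\log x)^{r-2}$.
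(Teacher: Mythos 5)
Your reduction --- grouping by $m=\lfloor x/d\rfloor$, inserting $D_r(t)=tP_{r-1}(\log t)+\Delta_r(t)$, and binomially expanding $(\log(x/m))^{j}$ --- is the paper's own route, and your main-term computation is correct (your constants $C_i=\sum_m f(m)(\log m)^i/(m(m+1))$ differ from the paper's $C_i=\sum_d f(d)\bigl((\log d)^i/d-(\log(d+1))^i/(d+1)\bigr)$ for $i\ge 1$, but they agree for $i=0$ and the discrepancy is $O(x(\log x)^{r-2})$, harmless here). The genuine gap is the step you yourself label the main obstacle: the sum $\sum_m f(m)\bigl(\Delta_r(x/m)-\Delta_r(x/(m+1))\bigr)$ is never estimated, and your diagnosis of what it needs is wrong. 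No averaged or short-interval information about $\Delta_r$ is required, and summation by parts should not be performed against differences of $\Delta_r$ at all. The resolution, which is precisely the paper's proof, is to discard the difference structure by the triangle inequality, bound each term by the pointwise Piltz estimate $|\Delta_r(t)|\ll t^{1-1/r}(\log t)^{r-2}$, and only then bring in the second hypothesis, through Abel summation applied to $f$: with $A(t)=\sum_{m\le t}|f(m)|\ll t(\log\log t)^{\beta}$,
\begin{equation*}
\sum_{m\le x}|f(m)|\Bigl(\frac{x}{m}\Bigr)^{1-\frac{1}{r}}(\log x)^{r-2}\ll x^{1-\frac{1}{r}}(\log x)^{r-2}\sum_{m\le x}\frac{|f(m)|}{m^{1-\frac{1}{r}}}
\end{equation*}
and
\begin{equation*}
\sum_{m\le x}\frac{|f(m)|}{m^{1-\frac{1}{r}}}=\frac{A(x)}{x^{1-\frac{1}{r}}}+\Bigl(1-\frac{1}{r}\Bigr)\int_{1}^{x}\frac{A(t)}{t^{2-\frac{1}{r}}}\,dt\ll x^{\frac{1}{r}}(\log\log x)^{\beta},
\end{equation*}
so the whole fluctuation term is $\ll x(\log x)^{r-2}(\log\log x)^{\beta}$, exactly the stated error. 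The point you missed is that it is $f$, not $\Delta_r$, that must be treated on average: the pointwise bound $|f(m)|\ll m^{\alpha}$ is what breaks the estimate once $\alpha>1/r$, and the hypothesis $\sum_{n\le x}f(n)\ll x(\log\log x)^{\beta}$ exists precisely to replace it. (Note that both your argument and the paper's tacitly need this hypothesis for $|f|$, i.e.\ $f\ge 0$, since it is used inside absolute values; also, with your cutoff at $\sqrt{x}$ the same computation gives the even smaller bound $x^{1-\frac{1}{2r}}(\log x)^{r-2}(\log\log x)^{\beta}$.)

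Your fallback recursion $S_{f,r}(x)=\sum_{b\le x}S_{f,r-1}(\lfloor x/b\rfloor)$ is a genuinely different and attractive idea: the base case needs only $f(n)\ll n^{\alpha}$, the error propagates as you say, and if carried out it would actually prove a stronger statement, with error $O(x(\log x)^{r-2})$ and no use of the second hypothesis at all. But as written it is a two-sentence sketch: you do not evaluate the propagated main term $\sum_{b\le x}\lfloor x/b\rfloor Q(\log\lfloor x/b\rfloor)$, which requires removing the floors (cost $O(x(\log x)^{r-2})$) and the asymptotics $\sum_{b\le x}(\log b)^{l}/b=(\log x)^{l+1}/(l+1)+O(1)$, nor do you verify that the outcome has the claimed polynomial shape with its coefficients. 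So neither of your two routes is complete as it stands; the first one closes immediately with the Abel-summation step above, which is where the paper does the real work.
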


\begin{lemma}
Let $f$ be an arithmetic function, and suppose that there exists a constant $%
\alpha \in \left[ 0,1\right) $ such that$\ $%
\begin{equation*}
f(n)\ll n^{\alpha }.
\end{equation*}%
Then the series 
\begin{equation*}
\sum_{n=1}^{\infty }f\left( n\right) \left( \frac{\left( \log n\right) ^{i}}{%
n}-\frac{\log \left( n+1\right) ^{i}}{n+1}\right)
\end{equation*}%
is convergent for any fixed integer $i\geq 0.$
\end{lemma}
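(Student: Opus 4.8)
The plan is to establish \emph{absolute} convergence of the series, which is more than enough. I would write $g(t) = \dfrac{(\log t)^{i}}{t}$ for $t \geq 2$, so that the general term is exactly $f(n)\bigl(g(n) - g(n+1)\bigr)$. Since convergence depends only on the tail, I would discard the first few terms (in particular the $n=1$ term, where $\log 1 = 0$ causes no trouble) and work with $n$ large. The strategy is then to bound the difference $g(n) - g(n+1)$ by a quantity of size roughly $n^{-2}$ and to combine it with the hypothesis $f(n) \ll n^{\alpha}$.

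First I would apply the mean value theorem: for each $n \geq 2$ there is some $\xi \in (n, n+1)$ with
$$g(n) - g(n+1) = -g'(\xi).$$
A direct differentiation gives
$$g'(t) = \frac{(\log t)^{i-1}\bigl(i - \log t\bigr)}{t^{2}},$$
so that $|g'(t)| \ll \dfrac{(\log t)^{i}}{t^{2}}$ as $t \to \infty$, the implied constant depending only on $i$. Since $\xi \asymp n$, this yields the clean bound
$$\bigl| g(n) - g(n+1) \bigr| \ll \frac{(\log n)^{i}}{n^{2}}.$$

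Combining this with $f(n) \ll n^{\alpha}$, the $n$-th term of the series satisfies
$$\left| f(n)\left(\frac{(\log n)^{i}}{n} - \frac{(\log (n+1))^{i}}{n+1}\right)\right| \ll \frac{n^{\alpha}(\log n)^{i}}{n^{2}} = \frac{(\log n)^{i}}{n^{2-\alpha}}.$$
Because $\alpha \in [0,1)$ we have $2 - \alpha > 1$; choosing any $\delta$ with $0 < \delta < 1 - \alpha$, the factor $(\log n)^{i}$ is eventually dominated by $n^{\delta}$, so the term is $\ll n^{-(2-\alpha-\delta)}$ with $2 - \alpha - \delta > 1$, and comparison with a convergent $p$-series finishes the argument. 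The series therefore converges absolutely, hence converges.

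Honestly there is no serious obstacle here: the only points requiring a little care are the routine verification of the derivative formula and the observation that the logarithmic factor never spoils convergence once the exponent strictly exceeds $1$. If one wished to avoid the mean value theorem entirely, the same bound could be obtained by writing $g(n) - g(n+1) = \int_{n}^{n+1} \bigl(-g'(t)\bigr)\,dt$ and estimating the integral, which would make the dependence on $i$ fully explicit.
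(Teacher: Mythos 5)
Your proposal is correct and follows essentially the same approach as the paper: both arguments reduce to showing that the $n$-th term is $\ll (\log n)^{i}/n^{2-\alpha}$ and then conclude by comparison with a convergent series, since $2-\alpha>1$. The only difference is cosmetic: you bound the consecutive difference $\frac{(\log n)^{i}}{n}-\frac{(\log(n+1))^{i}}{n+1}$ via the mean value theorem applied to $g(t)=(\log t)^{i}/t$, whereas the paper obtains the same bound by a Taylor expansion of $\log(n+1)$ followed by a binomial expansion of $\left(\log(n+1)\right)^{i}$.
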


\begin{proof}
As the integer $n$ tends to infinity, we have the following approximation%
\begin{eqnarray*}
\log \left( n+1\right) &=&\log n+\log (1+\frac{1}{n}) \\
&=&\log n+\frac{1}{n}-\frac{1}{2n^{2}}+\mathcal{O}\left( \frac{1}{n^{3}}%
\right) \\
&=&\log n\left( 1+\frac{1}{n\log n}-\frac{1}{2n^{2}\log n}+\mathcal{O}\left( 
\frac{1}{n^{3}\log n}\right) \right) .
\end{eqnarray*}%
Thus, for any fixed integer $i\geq 0$,%
\begin{equation*}
\left( \log \left( n+1\right) \right) ^{i}=\left( \log n\right) ^{i}\left( 1+%
\frac{1}{n\log n}-\frac{1}{2n^{2}\log n}+\mathcal{O}\left( \frac{1}{%
n^{3}\log n}\right) \right) ^{i}.
\end{equation*}%
Applying the binomial expansion,%
\begin{equation*}
\left( \log \left( n+1\right) \right) ^{i}=\left( \log n\right) ^{i}+i\frac{%
\left( \log n\right) ^{i-1}}{n}+\mathcal{O}\left( \frac{\left( \log n\right)
^{i-2}}{n^{2}}\right) .
\end{equation*}%
Hence, the term 
\begin{equation*}
f\left( n\right) \left( \frac{\left( \log n\right) ^{i}}{n}-\frac{\log
\left( n+1\right) ^{i}}{n+1}\right) \ll f\left( n\right) \frac{\left( \log
n\right) ^{i}}{n^{2}}\ll \frac{\left( \log n\right) ^{i}}{n^{2-\alpha }}.
\end{equation*}%
For $\alpha \in \left[ 0,1\right) ,$ the series 
\begin{equation*}
\sum\limits_{n=1}^{\infty }\dfrac{\left( \log n\right) ^{i}}{n^{2-\alpha }}
\end{equation*}%
is convergent for any integer $i\geq 0.$
\end{proof}

\begin{proof}[Proof of Theorem 1]
Firstly, we have 
\begin{equation*}
S_{f,r}(x)=\sum_{n\leq x}f\left( \left\lfloor \frac{x}{n}\right\rfloor
\right) \tau _{r}\left( n\right) .
\end{equation*}%
By setting $d=\left\lfloor \frac{x}{n}\right\rfloor ,$ we obtain 
\begin{eqnarray*}
S_{f,r}(x) &=&\sum_{n\leq x}\sum_{d=\left\lfloor \frac{x}{n}\right\rfloor
}f\left( d\right) \tau _{r}\left( n\right) \\
&=&\sum_{d\leq x}f\left( d\right) \sum_{\frac{x}{d+1}<n\leq \frac{x}{d}}\tau
_{r}\left( n\right) .
\end{eqnarray*}%
To proceed further, we use the identity%
\begin{equation*}
\sum_{\frac{x}{d+1}<n\leq \frac{x}{d}}\tau _{r}\left( n\right) =\sum_{n\leq 
\frac{x}{d}}\tau _{r}\left( n\right) -\sum_{n\leq \frac{x}{d+1}}\tau
_{r}\left( n\right) .
\end{equation*}%
Additionally, using the known formula ( see \cite{TIMARCH}) 
\begin{equation}
\sum_{n\leq x}\tau _{r}\left( n\right) =xP_{k}\left( \log x\right) +\mathcal{%
O}\left( x^{1-\frac{1}{r}}\left( \log x\right) ^{r-2}\right) ,  \label{3}
\end{equation}%
where 
\begin{equation*}
P_{k}\left( x\right) =\sum_{i=0}^{r-1}a_{i}x^{i},
\end{equation*}%
is a polynomial of degree $r-1.$ Thus, we obtain 
\begin{eqnarray*}
S_{f,r}(x) &=&x\sum_{j=0}^{r-1}\sum_{i=0}^{j}\left( 
\begin{array}{c}
j \\ 
i%
\end{array}%
\right) \left( -1\right) ^{i}a_{j}\left( \log x\right) ^{j-i}\times
\sum_{d\leq x}f\left( d\right) \left( \frac{\left( \log d\right) ^{i}}{d}-%
\frac{\left( \log \left( d+1\right) \right) ^{i}}{d+1}\right) \\
&& \\
&&+\mathcal{O}\left( \sum_{d\leq x}f\left( d\right) \left( \frac{x}{d}%
\right) ^{1-\frac{1}{r}}\left( \log \frac{x}{d}\right) ^{r-2}\right) .
\end{eqnarray*}%
Furthermore, we use the estimate%
\begin{equation*}
\sum_{d\leq x}f\left( d\right) \left( \frac{\left( \log d\right) ^{i}}{d}-%
\frac{\log \left( d+1\right) ^{i}}{d+1}\right) =C_{i}+\mathcal{O}\left(
x^{\alpha -1}\right) ,
\end{equation*}%
where $C_{i}$\ represents the convergence constant of the infinite series 
\begin{equation*}
\sum_{d=1}^{\infty }f\left( d\right) \left( \frac{\left( \log d\right) ^{i}}{%
d}-\frac{\log \left( d+1\right) ^{i}}{d+1}\right) ,
\end{equation*}%
which is ensured by Lemma 1. Then we obtain%
\begin{eqnarray*}
S_{f,r}(x) &=&x\sum_{j=0}^{r-1}\sum_{i=0}^{j}\left( 
\begin{array}{c}
j \\ 
i%
\end{array}%
\right) \left( -1\right) ^{i}a_{j}\left( \log x\right) ^{j-i}\left( C_{i}+%
\mathcal{O}\left( x^{\alpha -1}\right) \right) \\
&&+\mathcal{O}\left( \sum_{d\leq x}f\left( d\right) \left( \frac{x}{d}%
\right) ^{1-\frac{1}{r}}\left( \log \frac{x}{d}\right) ^{r-2}\right) .
\end{eqnarray*}%
On the other hand, we observe that%
\begin{equation*}
\sum_{d\leq x}f\left( d\right) \left( \frac{x}{d}\right) ^{1-\frac{1}{r}%
}\left( \log \frac{x}{d}\right) ^{r-2}\ll x^{1-\frac{1}{r}}\left( \log
x\right) ^{r-2}\sum_{d\leq x}\frac{f\left( d\right) }{d^{1-\frac{1}{r}}}.
\end{equation*}%
Here, we aim to estimate the sum 
\begin{equation*}
\underset{d\leq x}{\sum }\frac{f\left( d\right) }{d^{1-\frac{1}{r}}},
\end{equation*}%
for which we apply Abel's summation formula. Thus, we can obtain the
following%
\begin{equation*}
\sum_{d\leq x}f\left( d\right) \left( \frac{x}{d}\right) ^{1-\frac{1}{r}%
}\left( \log \frac{x}{d}\right) ^{r-2}\ll x\left( \log x\right) ^{r-2}\left(
\log \log x\right) ^{\beta }.
\end{equation*}%
Finally, we conclude that 
\begin{equation*}
S_{f,r}(x)=x\sum_{j=0}^{r-1}\sum_{i=0}^{j}\left( 
\begin{array}{c}
j \\ 
i%
\end{array}%
\right) \left( -1\right) ^{i}a_{j}C_{i}\left( \log x\right) ^{j-i}+\mathcal{O%
}\left( x\left( \log x\right) ^{r-2}\left( \log \log x\right) ^{\beta
}\right) .
\end{equation*}
\end{proof}

\begin{proposition}
Let $f$ be an arithmetic function defined by 
\begin{equation*}
f\left( n\right) =\sum_{p^{\alpha _{i}}\parallel n}g\left( \alpha
_{i}\right) ,
\end{equation*}%
such that $g(n)$ is an arithmetic function verifying $g\left( 0\right) =0$, $%
g\left( 1\right) \neq 0$ and $g\left( n\right) =\mathcal{O}\left(
2^{n/2}\right) $. Keeping the same notation as in Theorem 1, we have 
\begin{equation*}
S_{f,r}(x)=x\sum_{j=0}^{r-1}\sum_{i=0}^{j}\left( 
\begin{array}{c}
j \\ 
i%
\end{array}%
\right) \left( -1\right) ^{i}a_{j}C_{i}\left( \log x\right) ^{j-i}+\mathcal{O%
}\left( x\left( \log x\right) ^{r-2}\log \log x\right) .
\end{equation*}
\end{proposition}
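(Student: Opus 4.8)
The plan is to recognize the Proposition as a direct application of Theorem 1. Since that theorem already produces the stated main term together with the error $\mathcal{O}(x(\log x)^{r-2}(\log\log x)^\beta)$ for every arithmetic function obeying the two hypotheses $f(n)\ll n^{\alpha_0}$ (some $\alpha_0\in[0,1)$) and $\sum_{n\le x}f(n)\ll x(\log\log x)^\beta$, it suffices to verify that the additive function $f(n)=\sum_{p^{\alpha_i}\parallel n}g(\alpha_i)$ satisfies both hypotheses with $\beta=1$. Everything then reduces to two estimates about this specific $f$, and the conclusion is obtained by substitution.

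First I would establish the pointwise bound. Writing $n=\prod_i p_i^{\alpha_i}$, each prime-power component satisfies $2^{\alpha_i}\le p_i^{\alpha_i}\le n$, so the hypothesis $g(\alpha_i)\ll 2^{\alpha_i/2}$ gives $g(\alpha_i)\ll (p_i^{\alpha_i})^{1/2}\le n^{1/2}$. Since $n$ has at most $\log_2 n$ coprime prime-power factors, summing yields $f(n)\ll n^{1/2}\log n$, which is $\ll n^{\alpha_0}$ for any fixed $\alpha_0\in(\tfrac12,1)$. This confirms the first hypothesis.

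The heart of the argument is the summatory estimate, where I must show $\beta=1$ is admissible. Because $f$ is additive over prime powers, interchanging the order of summation gives
\[
\sum_{n\le x}f(n)=\sum_{p}\sum_{k\ge 1}g(k)\left(\left\lfloor\frac{x}{p^{k}}\right\rfloor-\left\lfloor\frac{x}{p^{k+1}}\right\rfloor\right),
\]
since $p^{k}\parallel n$ holds for exactly $\lfloor x/p^{k}\rfloor-\lfloor x/p^{k+1}\rfloor$ integers $n\le x$, and $g(0)=0$ kills the primes not dividing $n$. Removing the floor functions at the cost of an $\mathcal{O}(1)$ error per admissible pair $(p,k)$ with $p^{k}\le x$, I would isolate the diagonal term $k=1$, which by Mertens' theorem contributes $g(1)\,x\sum_{p\le x}(p^{-1}-p^{-2})=g(1)\,x\log\log x+\mathcal{O}(x)$. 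The tail $k\ge 2$ is controlled using $g(k)\ll 2^{k/2}$: the double series $\sum_{p}\sum_{k\ge 2}2^{k/2}p^{-k}$ converges geometrically, each inner sum being dominated by a geometric series of ratio $\sqrt{2}/p\le \sqrt{2}/2<1$, so this part contributes only $\mathcal{O}(x)$. Hence $\sum_{n\le x}f(n)=g(1)\,x\log\log x+\mathcal{O}(x)\ll x\log\log x$, giving $\beta=1$.

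I expect the main obstacle to be the bookkeeping of the rounding errors in the summatory estimate: one must check that the accumulated $\mathcal{O}(g(k))$ contributions, summed over the $\mathcal{O}(x/\log x)$ primes with $k=1$ and the sparser higher prime powers, remain within $\mathcal{O}(x)$. This is precisely where $g(k)\ll 2^{k/2}$ is essential, since it forces the relevant geometric ratios below $1$ even for the smallest prime $p=2$, ensuring both the convergence of the $k\ge 2$ tail and the control of the error terms $\sum_{p^{k}\le x}2^{k/2}\ll x$. Once both hypotheses are verified with $\beta=1$, the Proposition follows immediately from the conclusion of Theorem 1.
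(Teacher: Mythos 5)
Your proof is correct, and its overall shape matches the paper's: both reduce the Proposition to verifying the two hypotheses of Theorem 1 with $\beta=1$ and then quoting that theorem, and your pointwise bound $f(n)\ll n^{1/2}\log n\ll n^{\alpha_0}$ is essentially the paper's argument (the paper bounds the number of prime-power components by $\log n/\log\log n$ where you use $\log_2 n$; both suffice). The genuine difference lies in the summatory estimate, which is the heart of your write-up: the paper does not prove $\sum_{n\le x}f(n)\ll x\log\log x$ at all, but simply cites Theorem 1 of R.~L.~Duncan's paper on applications of the Tur\'an--Kubilius inequality, whereas you give a self-contained elementary proof via the identity
\begin{equation*}
\sum_{n\le x}f(n)=\sum_{p}\sum_{k\ge 1}g(k)\left(\left\lfloor\frac{x}{p^{k}}\right\rfloor-\left\lfloor\frac{x}{p^{k+1}}\right\rfloor\right),
\end{equation*}
Mertens' theorem for the $k=1$ diagonal, and geometric decay (enabled by $g(k)\ll 2^{k/2}$, which keeps the ratio $\sqrt{2}/p\le\sqrt{2}/2<1$ even at $p=2$) for the $k\ge 2$ tail and the rounding errors. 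This buys independence from the Tur\'an--Kubilius machinery and in fact yields the sharper asymptotic $\sum_{n\le x}f(n)=g(1)\,x\log\log x+\mathcal{O}(x)$, incidentally explaining the role of the hypothesis $g(1)\neq 0$; the paper's citation is shorter but leaves this key input unproved within the paper. Your claimed rounding-error bound $\sum_{p^{k}\le x}2^{k/2}\ll x$ does hold: for $k\ge 2$ each term is at most $2^{k/2}x^{1/k}\le 2\sqrt{x}$ (the exponent $\tfrac{k}{2}\log 2+\tfrac{1}{k}\log x$ is convex in $k$, hence maximized at the endpoints $k=2$ and $k=\log_2 x$), and there are only $\mathcal{O}(\log x)$ admissible values of $k$, so the sketch is sound.
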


\begin{proof}
The function $f$ satisfies the first condition of Theorem 1. In fact, we
know that for any integer $n=p_{1}^{\alpha _{1}}p_{2}^{\alpha
_{2}}...p_{r}^{\alpha _{r}},$ we have $\alpha _{i}\leq \log n/\log 2.$
Therefore, we obtain 
\begin{equation*}
\sum_{p^{\alpha }\parallel n}g\left( \alpha \right) \ll \sum_{p^{\alpha
}\parallel n}2^{\alpha /2}\ll n^{1/2}\sum_{p^{\alpha }\parallel n}1\ll \frac{%
\log n}{\log \log n}n^{1/2}=\mathcal{O}\left( n^{\frac{1}{2}+\varepsilon
}\right) ,\text{ }0<\varepsilon <\frac{1}{2}.
\end{equation*}%
The second condition is also satisfied, with $\beta =1$ according to theorem
1 of $\cite{R. L. Duncan}$.
\end{proof}

\section{\textbf{Examples }}

\begin{enumerate}
\item For a fixed integer $k\geq 0,$ let $g\left( n\right) =n^{k},$ then we
have%
\begin{equation*}
f\left( n\right) =\Omega _{k}\left( n\right) =\sum_{p^{\alpha }\parallel
n}\alpha ^{k}.
\end{equation*}%
Notably, $\Omega _{0}\left( n\right) =\omega \left( n\right) $ and $\Omega
_{1}\left( n\right) =\Omega \left( n\right) $. Using results from $\cite%
{DUNCAN}$ or $\cite{M HASSANI}$, we find 
\begin{equation*}
\sum_{n\leq x}\Omega _{k}\left( n\right) \ll x\log \log x.
\end{equation*}%
Thus 
\begin{equation*}
S_{\Omega _{k},r}\left( x\right) =x\sum_{j=0}^{r-1}\sum_{i=0}^{j}\left( 
\begin{array}{c}
j \\ 
i%
\end{array}%
\right) \left( -1\right) ^{i}a_{j}C_{i}\left( \log x\right) ^{j-i}+\mathcal{O%
}\left( x\left( \log x\right) ^{r-2}\log \log x\right) .
\end{equation*}

\item The same formula applies by replacing $\Omega _{k}\left( n\right) $
with $\tau \left( \tau \left( n\right) \right) $ or $\Omega \left( \tau
\left( n\right) \right) $ (see \cite{Hppner}). Similarly, by replacing with $%
\dfrac{\omega \left( n\right) }{p\left( n\right) }$ or $\dfrac{\Omega \left(
n\right) }{p\left( n\right) }$, where $p\left( n\right) $ is the smallest
prime divisor of $n$ ( see\cite{ZHANG})\textbf{.}

\item For the function $f\left( n\right) =\omega ^{2}\left( n\right) ,$ (see 
\cite{Shapiro}) we have%
\begin{equation*}
S_{\omega ^{2},r}\left( x\right) =x\sum_{j=0}^{r-1}\sum_{i=0}^{j}\left( 
\begin{array}{c}
j \\ 
i%
\end{array}%
\right) \left( -1\right) ^{i}a_{j}C_{i}\left( \log x\right) ^{j-i}+O\left(
x\left( \log x\right) ^{r-2}\left( \log \log x\right) ^{2}\right) .
\end{equation*}
\end{enumerate}


\begin{thebibliography}{99}
\bibitem{Bordell} O. Bordell\`{e}s, \textit{On certain sums of number theory}%
, Int. J. Number Theory. $18(2022),$ $9,$ $2053-2074.$

\bibitem{BDHPS} O. Bordell\`{e}s, L. Dai, R. Heyman, H. Pan, I.E.
Shparlinski, \textit{On a sum of involving the Euler funtion}, J. Number
Theory. $202(2019),$ $278-297.$

\bibitem{karras and Bouder} M. Bouderbala and M. Karras, \textit{On a sum
involving the number of distinct prime factors function related to the
integer part function}, Notes Number Theory Discrete Math.$26(2020),$ $52-5.$

\bibitem{DUNCAN} R. L. Duncan, \textit{A class of additive arithmetical
functions}, American Mathematical Monthly vol. $69(1962),$ $34-36.$

\bibitem{R. L. Duncan} R. L. Duncan, \textit{Some Applications of the Tur%
\'{a}n-Kubilius Inequality}, Proceedings of the American Mathematical
Society, Vol. $30,$ No. $1(1971),$ $69-72.$

\bibitem{M HASSANI} M. Hassani, \textit{Asymptotic expansions for the
average of the generalized omega function}, Integers $18,$ Art. A.$23(2018).$

\bibitem{Hppner} E. Heppner, \textit{\"{U}ber die Iteration von
Teilerfunktionen}, J. Reine Angew. Math. $265(1974),$ $176-182.$

\bibitem{KA LI ST} M. Karras, L. Li, J. Stucky, \textit{Hyperbolic summation
for fractional sums}, Acta Arithmetica, Online First version, Instytut
Matematyczny Pan, $2023.$

\bibitem{MS} J. Ma and H.Y. Sun, \textit{On a sum of involving the divisor
funtion}, Periodica Mathematica Hungarica. $83(2021),$ $185-191.$

\bibitem{Shapiro} H.N. Shapiro, \textit{Introduction to the theory of numbers%
}, John Wiley and Sons, $1983,$ p. $347$.

\bibitem{TIMARCH} E. C. Titchmarsh, \textit{The theory of the Riemann
zeta-function}, 2nd edn. In: Heath-Brown, D. R. (ed.) Oxford University
Press, New York $(1986).$

\bibitem{W} J. Wu, \textit{Note on a paper by Bordell\`{e}s, Dai, Heyman,
Pan, Shparlinski}, Periodica Mathematica Hungarica. $80(2020),$ $95-102.$

\bibitem{Z} W.G. Zhai, \textit{On a sum of involving the Euler funtion}, J.
Number Theory. $211(2020),$ $199-219.$

\bibitem{ZHANG} W. P. Zhang, \textit{Average-value estimation of a class of
number-theoretic functions}, Acta Math. Sinica, $32(1989),$ $260-267.$
\end{thebibliography}
\end{document}